\documentclass[11pt]{article}

\usepackage[a4paper,margin=1in]{geometry}
\usepackage{microtype}
\usepackage{amsmath,amssymb,amsfonts,amsthm,mathtools}
\usepackage{enumitem}
\usepackage{booktabs}
\usepackage{graphicx}
\usepackage{float}
\usepackage{tikz}
\usepackage{pgfplots}
\pgfplotsset{compat=1.18}
\usepackage{xcolor}

\usepackage{hyperref}
\usepackage[nameinlink,noabbrev]{cleveref}
\usepackage{xurl}

\newtheorem{theorem}{Theorem}[section]
\newtheorem{lemma}[theorem]{Lemma}
\newtheorem{proposition}[theorem]{Proposition}

\theoremstyle{definition}
\newtheorem{definition}[theorem]{Definition}
\theoremstyle{remark}
\newtheorem{remark}[theorem]{Remark}

\newcommand{\CC}{\mathbb{C}}

\newcommand{\calH}{\mathcal{H}}
\newcommand{\calP}{\mathcal{P}}

\DeclareMathOperator{\ind}{ind}

\title{Zero-freeness of a multivariate monomer--dimer--cycle polynomial on bounded-degree graphs}
\author{Paula M.\ S.\ Fialho\footnote{paula.fialho@dcc.ufmg.br. Acknowledges support from FAPEMIG.} \and Gabriel Coutinho\footnote{gabriel@dcc.ufmg.br. Acknowledges support from CNPq and FAPEMIG.}}
\date{\today}

\begin{document}
\maketitle

\begin{abstract}
We initiate the study of a multivariate graph polynomial $\Phi_G(x,y,z)$ that interpolates between classical counting polynomials for matchings and for cycle structures arising in the Harary--Sachs expansion of the characteristic polynomial. We focus on analytic properties and computational consequences. Our main contribution is an explicit, degree-uniform zero-free region for $\Phi_G$ on bounded-degree graphs, obtained via the Fern\'andez--Procacci convergence criterion for abstract polymer gases.
\end{abstract}

\section{Introduction}

The study of graph polynomials and their zeros is a central theme in algebraic combinatorics, with deep connections to statistical mechanics, probability theory, and theoretical computer science.
In this work, we propose the study of a new multivariate polynomial that interpolates between the matching polynomial and the characteristic polynomial of the adjacency matrix of a graph.

Let $G=(V,E)$ be a finite simple graph with $|V|=n$ and maximum degree $\Delta<\infty$.
We recall that a \emph{simple $k$-cycle} is a cycle on $k$ distinct vertices (and $k$ edges), with $k\ge 3$.

\begin{definition}[Sesquivalent (or elementary) subgraphs]\label{def:sesq}
A spanning subgraph $H=(V,E_H)$ of $G$ is \emph{sesquivalent} if every connected component of $H$ is either
an isolated vertex, an edge (a copy of $K_2$), or a simple cycle.
We denote by $\calH(G)$ the set of all sesquivalent subgraphs of $G$.
\end{definition}

For $H\in\calH(G)$, let
\[
v(H):=\#\{\text{isolated vertices of }H\},\qquad
e(H):=\#\{\text{edge-components of }H\},
\]
and for each $k\ge 3$ let $c_k(H)$ denote the number of $k$-cycle components of $H$.
Then, if $G$ has $n$ vertices,
\begin{equation}\label{eq:sesq-balance}
v(H)+2e(H)+\sum_{k\ge 3}k\,c_k(H)=n.
\end{equation}
Let us represent the total number of cycles of a sesquivalent graph by $c(H)$, namely 
\[
c(H):=\sum_{k\ge 3}c_k(H).
\]

\begin{definition}[Sesquivalent polynomial]\label{def:phi}
We define the multivariate polynomial
\begin{equation}\label{eq:phi}
\Phi_G(x,y,z)\;:=\;\sum_{H\in\calH(G)} x^{v(H)}\,y^{e(H)}\,z^{c(H)}\qquad (x,y,z\in\CC).
\end{equation}
\end{definition}

The polynomial $\Phi_G(x,y,z)$ is a generalization of two well-studied graph polynomials, and shares several properties common to both, as we briefly comment below:

\paragraph{Characteristic polynomial.}
Given a finite simple graph $G=(V,E)$, the Harary--Sachs expansion expresses the coefficients of $\phi_G(\lambda)=\det(\lambda I-A_G)$ in terms of sesquivalent subgraphs. In particular, setting $y=-1$ and $z=-2$ gives $\Phi_G(\lambda,-1,-2)=\phi_G(\lambda)$ (see \cite{Harary62,Sachs66}). If $G$ has maximum degree $\Delta$, then $\Phi_G(\lambda,-1,-2)\neq 0$ whenever $|\lambda|>\Delta$.

\paragraph{Matching polynomial.}
Setting $z=0$ suppresses all cycle-components and reduces \eqref{eq:phi} to a monomer--dimer/matching-type partition function (see \cite{HLmonomerdimer}).
For the specialization $y=-1$ we obtain the matching polynomial
\[
\mu_G(x)=\sum_{k\ge 0}(-1)^k m_k(G)\,x^{n-2k},
\]
where $m_k(G)$ is the number of matchings of size $k$.
By the Heilmann--Lieb theorem \cite{HLmonomerdimer}, $\mu_G$ has only real zeros and all zeros lie in
$[-2\sqrt{\Delta-1},\,2\sqrt{\Delta-1}]$.

\paragraph{Recurrences and reconstruction.} The polynomial $\Phi_G(x,y,z)$ satisfies
\[
\frac{\partial}{\partial x} \Phi_G(x,y,z) = \sum_{a \in V(G)} \Phi_{G\setminus a}(x,y,z),
\]
and analogous expressions for the other variables. From the expression above, it follows that $\Phi_G(x,y,z)$ is reconstructible from the deck of vertex-deleted subgraphs of $G$ (the proof is a straightforward generalization of known cases, see \cite[Section 4.5]{godsil2017algebraic} for instance). It is also possible to derive recurrences for $\Phi_G(x,y,z)$ assuming $G$ is disconnected, or contains a cut-edge or a cut-vertex (along the lines of known expressions for $\phi_G$ and $\mu_G$, as in \cite[Sections 1.1 and 2.1]{godsil2017algebraic}).

\medskip

In this paper we show that, for any finite graph of bounded maximum degree, the sesquivalent polynomial can be interpreted (up to an explicit multiplicative factor) as the partition function of a hard-core polymer gas, in the standard abstract polymer-gas framework of statistical mechanics (see \cite{GruberKunz71, ScottSokal2005, FernandezProcacci07}). Partition functions of this type are central objects in both statistical physics and combinatorics, since their analytic properties, most notably the existence of explicit zero-free regions, encode absence of phase transitions and, in turn, enable powerful tools such as convergent cluster expansions (\cite{KoteckyPreiss1986, Dobrushin96, FernandezProcacci07})  and deterministic approximation schemes (\cite{Barv16, PatReg19, bencs2025barvinoksinterpolationmethodmeets}).

Our attention is on regime $z\neq 0$, since as mentioned above, for $z=0$ polynomial $\Phi_G(x,y,z)$ recovers the classical monomer--dimer/matching specialization, for which sharp results are known and  a general polymer-expansion bound is not expected to compete with the results on this axis. For $z\neq 0$  we develop a \emph{genuinely multivariate} analytic theory, by applying the Fern\'andez--Procacci criterion \cite{FernandezProcacci07}, the best convergence criterion for the abstract polymer gas, to derive an explicit zero-free region in \((x,y,z)\) for bounded-degree graphs, including a nontrivial regime where the cycle parameter \(z\) is not merely infinitesimal.

\begin{theorem}[Zero-free region for $\Phi_G$]\label{thm:main}
Let $G$ be a finite graph with maximum degree $\Delta\ge 2$.
Assume $x\in\CC$ satisfies
\begin{equation}\label{eq:x-cond}
|x|>(\Delta-1)e^{a}\qquad\text{for some }a>0.
\end{equation}
Then $\Phi_G(x,y,z)\neq 0$ whenever $(y,z)\in\CC^2$ satisfy
\begin{equation}\label{eq:main-ineq}
|y|+\frac{(\Delta-1)e^{a}}{|x|-(\Delta-1)e^{a}}\,|z|
\;\le\;
\alpha_\Delta\,(e^{a}-1),
\qquad
\alpha_\Delta:=\frac{(\Delta-1)^2}{\Delta}.
\end{equation}
\end{theorem}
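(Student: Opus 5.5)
Factor out $x^n$ and treat $\Phi_G$ as a hard-core polymer gas. By \eqref{eq:sesq-balance},
\[
\Phi_G(x,y,z)=x^n\sum_{H\in\calH(G)}\prod_{\text{edges}}\frac{y}{x^2}\prod_{k\text{-cycles}}\frac{z}{x^k}.
\]
We take as polymers the edges of $G$ and the simple cycles of $G$. Each polymer is identified with its vertex set, two polymers are incompatible when their vertex sets intersect, and the activities are $w(e)=y/x^2$ and $w(C)=z/x^{|C|}$. Then $\Phi_G/x^n$ is exactly the partition function $\Xi$ of this gas, and since $x\ne 0$ it suffices to show $\Xi\neq 0$. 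We will use the Fern\'andez--Procacci criterion: if there are $\mu(\gamma)>0$ with
\[
|w(\gamma)|\,\Xi_{\mathcal{I}(\gamma)}(\mu)\le \mu(\gamma)-|w(\gamma)|...
\]
or in the simpler form (which suffices here) $|w(\gamma)|\le \mu(\gamma)\big/\Xi_{\mathcal I(\gamma)}(\mu)$, then $\Xi\ne 0$. Here $\Xi_{\mathcal I(\gamma)}(\mu)$ is the sum over pairwise compatible families of polymers incompatible with $\gamma$.

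We choose the ansatz $\mu(\gamma)=|w(\gamma)|e^{a|\gamma|}$ (or $\mu(\gamma)=c\,e^{a|\gamma|}$), with the weight proportional to $e^{a\cdot(\text{number of vertices})}$. Because compatible polymers in $\mathcal I(\gamma)$ must be vertex-disjoint and each must hit $\gamma$, we get the per-vertex product bound
\[
\Xi_{\mathcal I(\gamma)}\le \prod_{v\in\gamma}\Big(1+\sum_{\gamma'\ni v}\mu(\gamma')\Big).
\]
So the criterion reduces to a single-vertex condition of the form
\[
|w|\text{-weighted sum over polymers through }v\;\le\; e^{a}-1,
\]
where the factor $e^{a|\gamma|}$ on the left is absorbed by the product over the $|\gamma|$ vertices.

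The next step is to count polymers through a vertex $v$. There are at most $\Delta$ edges at $v$. For cycles, the number of $k$-cycles through $v$ is at most $\frac{\Delta(\Delta-1)^{k-2}}{2}\le \Delta(\Delta-1)^{k-2}$, obtained by counting non-backtracking walks. The cycle contribution at $v$ is therefore bounded by
\[
\sum_{k\ge3}\Delta(\Delta-1)^{k-2}\frac{|z|e^{ak}}{|x|^{k}},
\]
which is a geometric series with ratio $(\Delta-1)e^{a}/|x|<1$ by \eqref{eq:x-cond}. It sums to a constant times $\frac{(\Delta-1)e^a}{|x|-(\Delta-1)e^a}|z|$, which matches the $z$-term of \eqref{eq:main-ineq}. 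The edge contribution at $v$ is $\Delta|y|e^{2a}/|x|^2$. We then use $|x|>(\Delta-1)e^a$ to bound $e^{2a}/|x|^2\le 1/(\Delta-1)^2$, which makes this contribution at most $\frac{\Delta}{(\Delta-1)^2}|y|$. Collecting the prefactors, the condition becomes
\[
\frac{\Delta}{(\Delta-1)^2}\Big(|y|+\tfrac{(\Delta-1)e^a}{|x|-(\Delta-1)e^a}|z|\Big)\le e^a-1,
\]
which is \eqref{eq:main-ineq}, provided the leftover powers of $(\Delta-1)$ and $e^{a}$ in the cycle term are arranged (for instance by normalizing with $\mu(\gamma)=e^{a|\gamma|}|w(\gamma)|$ and comparing $e^{ak}/|x|^k$ against $((\Delta-1)e^a/|x|)^{k-2}/(\Delta-1)^2$) so that they yield the same $\Delta/(\Delta-1)^2$ prefactor.

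I expect the main obstacle to be this bookkeeping: choosing $\mu$ so that both the edge term and the cycle series produce exactly the common factor $\alpha_\Delta^{-1}$ and the stated $z$-coefficient, without losing an extra $e^{a}$ or factor $\Delta/(\Delta-1)$. The first thing to try is $\mu(\gamma)=|w(\gamma)|e^{a|\gamma|}$. If the constants are off, the next step is to rescale using the slack available in $|x|>(\Delta-1)e^a$ (as the $e^{2a}/|x|^2$ estimate does), or to use the sharper Fern\'andez--Procacci form with $\mu(\gamma)-|w(\gamma)|$ in place of the product bound.
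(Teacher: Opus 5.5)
Your proposal is correct and follows the paper's proof: the same polymer gas, the same vertex-anchored Fern\'andez--Procacci condition $\sum_{\gamma\ni v}|w(\gamma)|e^{a|V(\gamma)|}\le e^{a}-1$, and the same counts $\Delta$ and $\Delta(\Delta-1)^{k-2}$ summed as a geometric series. The paper simply cites the vertex-anchored condition, whereas you derive it from the $\mu$-form via $\mu(\gamma)=|w(\gamma)|e^{a|V(\gamma)|}$ and the per-vertex product bound; note that $|w(\gamma)|\le\mu(\gamma)/\Xi_{\mathcal I(\gamma)}(\mu)$ is the criterion itself, so your truncated first display can be dropped.

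The bookkeeping you flagged closes with no loss and needs no rescaling. Set $r=(\Delta-1)e^{a}/|x|<1$. The vertex sum is then at most $\frac{\Delta}{(\Delta-1)^2}\,r^{2}\bigl(|y|+\frac{r}{1-r}|z|\bigr)$, where $\frac{r}{1-r}=\frac{(\Delta-1)e^{a}}{|x|-(\Delta-1)e^{a}}$. A single use of $r^{2}<1$ (your estimate $e^{2a}/|x|^{2}<(\Delta-1)^{-2}$), applied to both terms at once, yields exactly the theorem's hypothesis.
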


As a computational consequence, working strictly inside this region yields a zero-free complex neighborhood along a canonical interpolation, and hence a deterministic Taylor/Barvinok-type approximation scheme for \(\Phi_G(x,y,z)\) (and for derivatives of \(\log \Phi_G\), i.e., densities of dimers and cycle-components) on graphs of maximum degree \(\Delta\).

This work is organized as follows: in Section \ref{sec:polymer} we relate $\Phi_G$ to an abstract polymer gas and prove Theorem \ref{thm:main}. In Section \ref{sec:barvinok} we provide a deterministic approximation for $\Phi$ à la Barvinok-Patel-Regts.

\section{Polymer-gas representation and proof of the zero-free region}\label{sec:polymer}

Models that combine isolated vertices (monomers) and copies of $K_2$ (dimers) with cycles have also been studied in statistical mechanics setting, e.g., the monomer--dimer--loop (MDL) model introduced by Li, Li, and Chen on the square lattice \cite{LiLiChen15MDL}. 
Our setting is different in two key respects: (i) we work on arbitrary finite graphs of bounded maximum degree (not a fixed lattice), and (ii) the additional objects are \emph{simple cycle-components} (vertex-disjoint cycles, as connected components of the chosen subgraph), weighted by a \emph{cycle parameter} \(z\) per cycle-component (rather than, e.g., an energy/fugacity per occupied loop bond as in lattice MDL models \cite{LiLiChen15MDL}).

Our strategy will be to relate this “cycle-component” weighting to the abstract polymer-gas, a discrete model proposed by Kotecky-Preiss \cite{KoteckyPreiss1986}. This model is defined by a triple
\((\calP, \omega, W)\), where $\calP$ is a countable set whose elements are called polymers, \(\omega : \calP \to  \CC \) is
a function that assigns to each polymer \(\gamma \in \calP\) a complex number \(\omega(\gamma)\), called the activity of $\gamma$ and \(W : \calP \times \calP \to \{0, 1\}\) is a function, called the Boltzmann factor, satisfying
$W(\gamma, \gamma) = 0$ for all $\gamma \in \calP$ and $W(\gamma, \gamma') = W(\gamma', \gamma)$ for all \(\{\gamma, \gamma'\} \in \calP\). Usually the pair
$\{\gamma, \gamma'\}$ is called \textit{incompatible} when $W(\gamma, \gamma') = 0$ and \textit{compatible} if $W(\gamma, \gamma') = 1$.
The partition function of this model can be written as 

\begin{equation}\label{eq:partfunction}
\Xi_{\calP}(\boldsymbol{\omega}) := \sum_{X \subseteq \mathcal{P}}  \prod_{\{\gamma, \gamma'\} \subseteq X} W(\gamma,\gamma')\prod_{\gamma \in X} \omega(\gamma).
\end{equation}
The best criterion for zero-free region of $\Xi_{\calP}(\boldsymbol{\omega})$ is known as  Fern\'andez–Procacci convergence criterion \cite{FernandezProcacci07}, as we will see below.


\subsection{From sesquivalent subgraphs to a polymer partition function}
Consider a graph $G=(V,E)$ and recall the sesquivalent polynomial defined in \ref{eq:phi}
\[
   \Phi_G(x,y,z) = \sum_{H \in \mathcal{H}(G)} y^{e(H)} z^{c(H)} x^{v(H)}.
\]
Fix $x\neq 0$. By Equation~\eqref{eq:sesq-balance}, each term in \eqref{eq:phi} can be rewritten as
\begin{align}\label{eq:rewrite}
x^{v(H)}y^{e(H)}z^{c(H)}
&=x^{\,n-2e(H)-\sum_{k\ge 3}k\,c_k(H)}\,y^{e(H)}\,z^{\sum_{k\ge 3}c_k(H)}\nonumber\\
&=x^n\Bigl(\frac{y}{x^2}\Bigr)^{e(H)}\prod_{k\ge 3}\Bigl(\frac{z}{x^k}\Bigr)^{c_k(H)}.
\end{align}
Hence, we can rewrite 

\begin{equation}\label{eq: xyz2}
\begin{aligned}
\Phi_G(x,y,z)
& = x^n \sum_{H\in\mathcal{H}(G)}\left[\left(\frac{y}{x^2}\right)^{e(H)} \prod_{k \ge 3} \left(\frac{z}{x^k 
 }\right)^{c_k(H)} \right]\\
 &=  x^n \sum_{H \in \calH(G)} \prod_{\substack{e \in H \\ e \, \text{is copy of}\, K_2 }} \omega(e) \prod_{\substack{C_k \in H \\ C_k \, \text{is $k$-cycle}}} \omega(C_k),
 \end{aligned}
\end{equation}
where, for any edge $e \in E$ and any $k$-cycle $C_k$ subgraph of $G$, we define its weight, respectively, as 
\begin{equation}\label{eq:actsesq}
\omega(e)= \frac{y}{x^2} \qquad \text{and} \qquad \omega(C_k)= \frac{z}{x^k 
 }.
\end{equation}
Note that a subgraph $H \in {\calH} (G)$ can be seen as a disjoint collection \(\{\gamma_1, \ldots, \gamma_m\}\), with $m \geq 0$, where each $\gamma_i$, for $i=1, \cdots, m$, represents a  connected component of $H$ (i.e., each $\gamma_i$ is either an edge or a cycle), then 
\begin{equation}\label{eq:xyzpartition}
\begin{aligned}
\Phi_G(x,y,z)= x^n \Xi_{\calH(G)}(\omega),
\end{aligned}
\end{equation}
where
\begin{equation}\label{eq: xyzpolymer}
\Xi_{\calH(G)}(\omega)= \sum_{m \geq 0} \sum_{\substack{H \in \calH(G)\\ H=\{\gamma_1, \ldots, \gamma_m\} \\ \gamma_i \cap \gamma_j = \emptyset } }\prod_{i=1}^{m} \omega(\gamma_i).
\end{equation}
Therefore, for $x\neq0$, a zero-free region for \(\Xi_{\calH(G)}(\omega)\) results in a region free of zeros for \(\Phi_G(x,y,z)\). 

Observe then that $\Xi_{\calH(G)}(\omega)$ can be seen as a hard-core polymer gas, with

\paragraph{Polymers.} The set of polymers $\calP $
is composed  of all edges $\{u,v\}\in E$ (with support $V(\gamma)=\{u,v\}$) and all \emph{simple cycles} $C$ in $G$ (with support $V(\gamma)=V(C)$).

\paragraph{Compatibility.}
 $W(\gamma, \gamma') = 0$ if $V(\gamma)\cap V(\gamma')\neq \varnothing$ and  $W(\gamma, \gamma') = 1$ otherwise. I.e., two polymers are incompatible if they share a vertex.

\paragraph{Activities.}
 the activities $\omega(\gamma)$ are given by~\ref{eq:actsesq}.
\medskip

We use the following standard (uniform) version of the Fern\'andez--Procacci criterion for abstract polymer gases; in the hard-core case (incompatibility by overlap of supports) it is often presented in a vertex-anchored form.

\begin{theorem}[Fern\'andez--Procacci criterion, hard-core case \cite{FernandezProcacci07}]\label{thm:FP}
Consider a hard-core polymer gas with polymer set $\calP$, incompatibility given by overlap of supports, and complex activities $\omega(\gamma)$.
If there exists $a>0$ such that
\begin{equation}\label{eq:FP}
\sup_{v\in V}\ \sum_{\substack{\gamma\in\calP\\ v\in V(\gamma)}} |\omega(\gamma)|\,e^{a|V(\gamma)|}
\ \le\ e^{a}-1,
\end{equation}
then the cluster expansion for $\log\Xi(\omega)$ converges absolutely and $\Xi(\omega)\neq 0$ at these activities.
\end{theorem}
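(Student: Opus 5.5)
The plan is a self-contained Dobrushin-type induction on vertex subsets, which gives the non-vanishing $\Xi(\omega)\neq 0$ directly. It does not give absolute convergence of the cluster expansion; for that part I would defer to \cite{FernandezProcacci07}.

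For $S\subseteq V$, let $Z_S$ denote the hard-core partition function restricted to polymers $\gamma$ with $V(\gamma)\subseteq S$, so that $Z_\varnothing=1$ and $Z_V=\Xi(\omega)$. The basic identity comes from splitting on whether $v\in S$ is covered:
\[
Z_S \;=\; Z_{S\setminus\{v\}} \;+\; \sum_{\substack{\gamma\ni v\\ V(\gamma)\subseteq S}} \omega(\gamma)\, Z_{S\setminus V(\gamma)} .
\]
Either no polymer covers $v$, or exactly one polymer $\gamma\ni v$ does, in which case the others avoid $V(\gamma)$ by hard-core exclusion.

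The inductive claim, on $|S|$, is
\[
Z_S\neq 0 \qquad\text{and}\qquad \Bigl|\frac{Z_{S\setminus\{v\}}}{Z_S}\Bigr|\le e^{a}\quad\text{for every } v\in S .
\]

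First I would control the ratios appearing in the identity. Fix $\gamma\ni v$ with $V(\gamma)\subseteq S$, and list $V(\gamma)\setminus\{v\}=\{u_1,\dots,u_{k-1}\}$ with $k=|V(\gamma)|$. Remove these vertices one at a time from $S\setminus\{v\}$ and write $Z_{S\setminus V(\gamma)}/Z_{S\setminus\{v\}}$ as a telescoping product of $k-1$ ratios of the form $Z_{T\setminus\{u\}}/Z_T$. Each set $T$ here is a proper subset of $S$, so the induction hypothesis applies: each $Z_T\neq 0$ and each ratio has modulus at most $e^a$. This gives
\[
|Z_{S\setminus V(\gamma)}|\le e^{a(|V(\gamma)|-1)}\,|Z_{S\setminus\{v\}}| ,
\]
with $Z_{S\setminus\{v\}}\neq0$.

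Next, divide the identity by $Z_{S\setminus\{v\}}$ and use hypothesis \eqref{eq:FP}:
\[
\Bigl|\frac{Z_S}{Z_{S\setminus\{v\}}}-1\Bigr| \le \sum_{\gamma\ni v}|\omega(\gamma)|\,e^{a(|V(\gamma)|-1)} \le e^{-a}(e^a-1)=1-e^{-a}.
\]
Hence $|Z_S/Z_{S\setminus\{v\}}|\ge e^{-a}>0$, which gives both $Z_S\neq0$ and the ratio bound, closing the induction. Taking $S=V$ yields $\Xi(\omega)\neq 0$.

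The main obstacle I expect is getting the exponent bookkeeping exactly right so that the condition is met with the sharp constant $e^a-1$. One factor $e^{-a}$ must be saved because $v$ itself is already removed before the telescoping starts. That saving is precisely what turns the bound into $1-e^{-a}<1$ rather than something of size $e^a-1$, which could exceed $1$.

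For the full convergence statement, I would note that the same ratio bounds hold uniformly in activities with moduli at most $|\omega(\gamma)|$. This gives analyticity of $\log\Xi$ on a polydisc, which is equivalent to absolute convergence of its power series, i.e.\ the cluster expansion. Alternatively, one can check the general Fern\'andez--Procacci condition $|\omega(\gamma)|\le\mu(\gamma)/\Xi_{\Gamma(\gamma)}(\mu)$ with $\mu(\gamma)=|\omega(\gamma)|e^{a|V(\gamma)|}$. Pairwise compatible families of polymers incompatible with $\gamma$ cover each vertex of $\gamma$ at most once, so
\[
\Xi_{\Gamma(\gamma)}(\mu)\le\prod_{u\in V(\gamma)}\Bigl(1+\sum_{\gamma'\ni u}\mu(\gamma')\Bigr)\le e^{a|V(\gamma)|},
\]
as required.
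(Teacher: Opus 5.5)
Your proof is correct. It takes a genuinely different route from the paper, which gives no argument for this statement and simply imports it from \cite{FernandezProcacci07}. There, the hard-core form is deduced from the general Fern\'andez--Procacci criterion, whose proof rests on Penrose tree-graph bounds for the Ursell coefficients.

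Your Dobrushin-type induction on vertex subsets is self-contained. It reaches exactly the same constant $e^{a}-1$, because the one factor of $e^{-a}$ saved by removing $v$ before telescoping is exactly what yields $1-e^{-a}<1$. It also gives the extra quantitative bound $|Z_{S\setminus\{v\}}/Z_S|\le e^{a}$, uniformly in $S$. The paper only ever uses the conclusion $\Xi(\omega)\neq 0$ (in the proof of Theorem~\ref{thm:main}, and through it in Lemma~\ref{lem:disk2}), so your induction alone covers everything needed downstream.

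Your second convergence route checks $|\omega(\gamma)|\le \mu(\gamma)/\Xi_{\Gamma(\gamma)}(\mu)$ with $\mu(\gamma)=|\omega(\gamma)|e^{a|V(\gamma)|}$, injecting each polymer of a compatible family that meets $\gamma$ into a distinct vertex of $V(\gamma)$. This is exactly how Fern\'andez and Procacci derive the hard-core corollary. There you coincide with the cited source and inherit its explicit, volume-uniform bounds on pinned cluster sums, which a pure analyticity argument does not provide.

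One small point needs fixing in your first convergence route. Zero-freeness of $\Xi$ on the closed polydisc $\{|w_\gamma|\le|\omega(\gamma)|\}$ only gives absolute convergence of the Taylor series of $\log\Xi$ in the \emph{open} polydisc. The given activities lie on its boundary. Add that $\calP$ is finite, so $\Xi$ is a polynomial that is zero-free on a compact set. Its zero set is closed, so $\Xi$ is also zero-free on a slightly larger polydisc with radii $|\omega(\gamma)|+\epsilon$. The Taylor series then converges absolutely at $\omega$, and its monomial coefficients are exactly the cluster-expansion terms grouped by multisets of polymers.
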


\subsection{Proof of the main theorem}

\begin{proof}[Proof of Theorem \ref{thm:main}]
Fix a vertex $v\in V$. We bound the left-hand side of \eqref{eq:FP} by separating edge-polymers and cycle-polymers. Observe that we have at most $\Delta$ edges incident to $v$.
On the other hand, a convenient (but not sharp) bound for
the number of $k$-cycles containing $v$ is 
$ \Delta\,(\Delta-1)^{\,k-2}\,,\, k\ge 3\,$. Therefore, we can bound

\begin{equation}\label{eq:FP-sum} 
\begin{aligned}
\sum_{\substack{\gamma \in \calP \\ v \in  V(\gamma) }} |\omega(\gamma)|e^{a|V(\gamma)|} 
& \le \sum_{\substack{\gamma \in E\\ v \in  V(\gamma)}} \frac{|y|}{|x|^2}e^{2a} + \sum_{k \ge 3}\sum_{\substack{\gamma \text{is a $k$-cycle} \\ v \in  V(\gamma)}} \frac{|z|}{|x|^k}e^{ak}\\
& \le \frac{|y|}{|x|^2}e^{2a} \Delta + \sum_{k \ge 3}\Delta\,(\Delta-1)^{\,k-2}\frac{|z|}{|x|^k}e^{ak}\\
& = \frac{|y|}{|x|^2}e^{2a} \Delta +\frac{ |z| \Delta}{(\Delta-1)^{2}}\sum_{k \ge 3}{\left[\frac{(\Delta-1)e^{a}}{|x|}\right]}^k\\
& = \frac{|y|}{|x|^2}e^{2a} \Delta +\frac{ |z| \Delta}{(\Delta-1)^{2}} \left[\frac{(\Delta-1)e^{a}}{|x|}\right]^3 \frac{|x|}{|x|-{(\Delta-1)e^{a}}}\\
& = \frac{|y|}{|x|^2}e^{2a} \Delta +{ |z| \Delta} \left[\frac{(\Delta-1) e^{3a}}{|x|^2}\right] \frac{1}{|x|-{(\Delta-1)e^{a}}}\\
& = \frac{\Delta e^{2a}}{|x|^{2}}\left[ |y|  +  \frac{|z|(\Delta-1) e^{a}}{|x|-{(\Delta-1)e^{a}}}\right]\\
& \leq \frac{\Delta }{(\Delta -1)^{2}} \left[ |y|  +  \frac{|z|(\Delta-1) e^{a}}{|x|-{(\Delta-1)e^{a}}}\right],
\end{aligned}
\end{equation} 
assuming that $|x| > (\Delta-1)e^a$.

Therefore, defining \(\alpha_\Delta=\frac{(\Delta-1)^2}{\Delta}\), by Theorem \ref{thm:FP}, $\Xi(\omega)\neq 0$ whenever 
\[
 \frac{\Delta }{(\Delta -1)^{2}} \left[ |y|  +  \frac{|z|(\Delta-1) e^{a}}{|x|-{(\Delta-1)e^{a}}}\right]\ \le\ e^{a}-1,
 \]
and since $\Phi_G(x,y,z)=x^n\Xi(\omega)$ with $x\neq 0$, we conclude $\Phi_G(x,y,z)\neq 0$ as claimed.
\end{proof}
\medskip

\begin{remark}[Girth refinement]\label{rem:girth}
If $G$ has girth at least $\mathfrak g\ge 3$, graph $G$ possesses cycles $C_k$ only for  $ k \ge \mathfrak g$.
The cycle contribution in~\eqref{eq:FP-sum} can be replaced by
\[
\sum_{k \ge \mathfrak g}\sum_{\substack{\gamma~\text{is a $k$-cycle} \\ v \in  V(\gamma)}} \frac{|z|}{|x|^k}e^{ak}\\
\le
\frac{\Delta|z|}{(\Delta-1)^2}\cdot\frac{r^{\mathfrak g}}{1-r},
\]
instead of $\frac{\Delta|z|}{(\Delta-1)^2}\cdot\frac{r^3}{1-r}$, improving the cycle bound by a factor $r^{\mathfrak g-3}$, where $r=\frac{(\Delta-1)e^{a}}{|x|}<1$.
\end{remark}

\paragraph{Ex.} 
Specializing $\Phi_G(x,y,z)$ to $y=-1$ gives the explicit ``small cycle activity'' radius
\[
|z|\ \le\ z_{\max}(x;a)
:= \frac{|x|-(\Delta-1)e^{a}}{(\Delta-1)e^{a}}
\Bigl(\alpha_\Delta\,(e^{a}-1)-1\Bigr),
\]
which is positive provided
\[
\alpha_\Delta\,(e^{a}-1)>1
\quad\Longleftrightarrow\quad
a>\log\!\Bigl(1+\frac{1}{\alpha_\Delta}\Bigr)
=\log\!\Bigl(1+\frac{\Delta}{(\Delta-1)^2}\Bigr).
\]
Thus, for any $a$ above this threshold and any $x$ with $|x|>(\Delta-1)e^{a}$, turning on a cycle parameter $z$ with $|z|\le z_{\max}(x;a)$ preserves nonvanishing of $\Phi_G(x,-1,z)$.

Equivalently,  in the $x$-plane:
\[
|x|\ \ge\ (\Delta-1)e^{a}\Bigl(1+\frac{|z|}{\alpha_\Delta(e^{a}-1)-1}\Bigr)
\quad\Longrightarrow\quad
\Phi_G(x,-1,z)\neq 0.
\]
In particular, for each fixed $z$ in the above regime, all zeros of the univariate polynomial
$x\mapsto \Phi_G(x,-1,z)$ are confined to the disk
\[
|x|<(\Delta-1)e^{a}\Bigl(1+\frac{|z|}{\alpha_\Delta(e^{a}-1)-1}\Bigr),
\]

The bounds above depend on $y$ only through $|y|$. In particular, for both $y=1$ (positive-weight loop--dimer gas) and $y=-1$ (matching-like cancellations) we have $|y|=1$ and the above example shows that the zeros cannot drift arbitrarily far when one adds a sufficiently small activity of vertex-disjoint cycles.
{(We do not claim real-rootedness persists for $z\neq 0$; rather, we obtain a robust complex zero-free region and holomorphy of $\log \Phi_G$.)}

\medskip
The auxiliary parameter $a$ can be optimized. Indeed, rearranging \eqref{eq:main-ineq} for $|z|$ gives
\begin{equation}\label{eq:zbound-a}
|z|\ \le\ \frac{|x|-(\Delta-1)e^{a}}{(\Delta-1)e^{a}}\,
\Bigl(\alpha_\Delta(e^{a}-1)-|y|\Bigr),
\end{equation}
valid when $|x|>(\Delta-1)e^{a}$ and $\alpha_\Delta(e^{a}-1)>|y|$. Set
\[
c:=\frac{|x|}{\Delta-1},\qquad t:=e^{a},
\]
so $1<t<c$ and $t>1+\frac{|y|}{\alpha_\Delta}$.
Then \eqref{eq:zbound-a} becomes
\begin{equation}\label{eq:g-def}
|z|\ \le\ g(t)
:=\Bigl(\frac{c}{t}-1\Bigr)\Bigl(\alpha_\Delta(t-1)-|y|\Bigr),
\qquad 1+\frac{|y|}{\alpha_\Delta}<t<c.
\end{equation}
Note that pushing $a$ close to the boundary value $\log c$ (i.e.\ $t\approx c$) makes the prefactor $\frac{c}{t}-1$ very small, hence is not optimal when the goal is to allow a nontrivial $|z|$. This reflects the trade-off between enlarging $e^{a}-1$ and keeping the multiplicative factor $\frac{|x|-(\Delta-1)e^{a}}{(\Delta-1)e^{a}}$ away from~$0$.

\begin{lemma}\label{lem:optimal-a}
Fix $\Delta$ and $|y|$, and assume the interval in \eqref{eq:g-def} is nonempty.
Then $g(t)$ is maximized at
\begin{equation}\label{eq:tstar}
t_\ast=\sqrt{c\Bigl(1+\frac{|y|}{\alpha_\Delta}\Bigr)},
\qquad\text{equivalently}\qquad
a_\ast=\log t_\ast
=\frac12\log\!\Bigl(\frac{|x|}{\Delta-1}\Bigr)
+\frac12\log\!\Bigl(1+\frac{|y|}{\alpha_\Delta}\Bigr).
\end{equation}
\end{lemma}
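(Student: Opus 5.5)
Write $\beta:=\alpha_\Delta+|y|$ and $t_0:=1+|y|/\alpha_\Delta$, so that $\alpha_\Delta(t-1)-|y|=\alpha_\Delta t-\beta$ and $t_0=\beta/\alpha_\Delta$. Expanding the product in \eqref{eq:g-def} gives, on the interval $t_0<t<c$,
\[
g(t)=\Bigl(\frac{c}{t}-1\Bigr)\bigl(\alpha_\Delta t-\beta\bigr)
=c\,\alpha_\Delta+\beta-\alpha_\Delta t-\frac{c\beta}{t}.
\]
The plan is to maximize this elementary one-variable function by calculus on the open interval $(t_0,c)$, which is nonempty by hypothesis ($t_0<c$).

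The first step is to differentiate:
\[
g'(t)=-\alpha_\Delta+\frac{c\beta}{t^2},\qquad g''(t)=-\frac{2c\beta}{t^3}<0 \quad\text{for } t>0.
\]
Hence $g$ is strictly concave on $(0,\infty)$, so any critical point in the interval is the unique global maximizer there. The equation $g'(t)=0$ has the single positive root
\[
t_\ast=\sqrt{c\beta/\alpha_\Delta}=\sqrt{c\,t_0}=\sqrt{c\Bigl(1+\frac{|y|}{\alpha_\Delta}\Bigr)},
\]
which is the first formula in \eqref{eq:tstar}.

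The only point needing care is that $t_\ast$ actually lies in the admissible interval. This holds because $t_\ast$ is the geometric mean of $t_0$ and $c$, and $t_0<c$, so $t_0<t_\ast<c$. Since $g'(t)>0$ for $t<t_\ast$ and $g'(t)<0$ for $t>t_\ast$, the function increases on $(t_0,t_\ast]$ and decreases on $[t_\ast,c)$. It also vanishes at both endpoints $t_0$ and $c$, consistent with $t_\ast$ being an interior maximum.

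Finally, take logarithms, using $c=|x|/(\Delta-1)$ and $a=\log t$. This gives
\[
a_\ast=\tfrac12\log\bigl(|x|/(\Delta-1)\bigr)+\tfrac12\log\bigl(1+|y|/\alpha_\Delta\bigr),
\]
as stated in \eqref{eq:tstar}. As an optional check, substituting $t_\ast$ back into $g$ gives the maximal value $g(t_\ast)=\bigl(\sqrt{c\,\alpha_\Delta}-\sqrt{\beta}\bigr)^2$.
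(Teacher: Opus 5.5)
Your proof is correct and follows the same route as the paper: expand $g$, compute $g'$ and $g''<0$, and identify the unique critical point $t_\ast^2=c(1+|y|/\alpha_\Delta)$. Your extra observation that $t_\ast$ is the geometric mean of the two endpoints fills a step the paper's proof leaves implicit. It shows $t_\ast$ is automatically interior whenever the interval is nonempty, so the condition $c>(1+|y|/\alpha_\Delta)^2$ in Proposition~\ref{prop:linear-z} is stronger than needed.
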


\begin{proof}
Expanding \eqref{eq:g-def} gives
\[
g(t)= -\alpha_\Delta t - \frac{c(\alpha_\Delta+|y|)}{t} + c\alpha_\Delta + (\alpha_\Delta+|y|).
\]
Thus
\[
g'(t)=-\alpha_\Delta + \frac{c(\alpha_\Delta+|y|)}{t^2},
\qquad
g''(t)=-\frac{2c(\alpha_\Delta+|y|)}{t^3}<0,
\]
so the unique critical point $t_\ast^2=\frac{c(\alpha_\Delta+|y|)}{\alpha_\Delta}=c(1+\frac{|y|}{\alpha_\Delta})$ is the unique maximizer.
\end{proof}

\begin{proposition}[Linear admissible cycle parameter]\label{prop:linear-z}
Fix $\Delta$ and $|y|$.
Assume $c=\frac{|x|}{\Delta-1}$ is large enough so that $t_\ast$ from \eqref{eq:tstar} lies in the admissible interval of \eqref{eq:g-def}
(e.g.\ it suffices that $c>\bigl(1+\frac{|y|}{\alpha_\Delta}\bigr)^2$).
Then \eqref{eq:g-def} holds with $a=a_\ast$ and
\begin{equation}\label{eq:zmax-exact}
|z|\ \le\ g(t_\ast)
=\alpha_\Delta c+(\alpha_\Delta+|y|)-2\alpha_\Delta\,t_\ast.
\end{equation}
In particular, as $|x|\to\infty$ with $\Delta$ and $|y|$ fixed,
\begin{equation}\label{eq:zmax-asymp}
|z|\ \le\ \frac{\Delta-1}{\Delta}\,|x|\ -\ O(\sqrt{|x|}).
\end{equation}
\end{proposition}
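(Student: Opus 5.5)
The plan is to take Lemma~\ref{lem:optimal-a} as the starting point and simply evaluate $g$ at the maximizer. There are three steps: check admissibility of $t_\ast$, substitute it into the expanded form of $g$, and read off the asymptotics.

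\textbf{Admissibility.} Write $b:=1+\frac{|y|}{\alpha_\Delta}\ge 1$, so that $t_\ast=\sqrt{cb}$. The admissible interval in \eqref{eq:g-def} is $b<t<c$. Since $t_\ast$ is the geometric mean of $b$ and $c$, it lies strictly between them exactly when $b<c$. The sufficient condition $c>b^2$ implies this, because $b\ge1$. Note that $c>b$ already makes the interval nonempty, so the lemma applies.

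\textbf{Exact value.} For $t$ in the admissible interval we have $t>1$ and $|x|>(\Delta-1)t$, and the constraint $\alpha_\Delta(t-1)>|y|$ holds. So \eqref{eq:zbound-a}, which is equivalent to \eqref{eq:main-ineq}, is valid with $a=a_\ast=\log t_\ast>0$. By Theorem~\ref{thm:main}, any $|z|\le g(t_\ast)$ gives nonvanishing. To compute $g(t_\ast)$, use the expanded form from the proof of Lemma~\ref{lem:optimal-a}:
\[
g(t)=c\alpha_\Delta+(\alpha_\Delta+|y|)-\alpha_\Delta t-\frac{c(\alpha_\Delta+|y|)}{t}.
\]
Since $c(\alpha_\Delta+|y|)=\alpha_\Delta t_\ast^2$, the last term at $t=t_\ast$ equals $\alpha_\Delta t_\ast$. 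Hence
\[
g(t_\ast)=\alpha_\Delta c+(\alpha_\Delta+|y|)-2\alpha_\Delta t_\ast,
\]
which is \eqref{eq:zmax-exact}.

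\textbf{Asymptotics.} Substitute $c=|x|/(\Delta-1)$ and $\alpha_\Delta=(\Delta-1)^2/\Delta$. This gives $\alpha_\Delta c=\frac{\Delta-1}{\Delta}|x|$. The remaining terms are
\[
(\alpha_\Delta+|y|)-2\alpha_\Delta\sqrt{b\,|x|/(\Delta-1)}.
\]
The constant is $O(1)$ and the square-root term is $O(\sqrt{|x|})$, with constants depending only on $\Delta$ and $|y|$. This yields \eqref{eq:zmax-asymp}.

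There is no real obstacle here. The only point needing care is confirming that $a_\ast>0$ and that every hypothesis of Theorem~\ref{thm:main} holds at $a_\ast$. Both follow from $b<t_\ast<c$.
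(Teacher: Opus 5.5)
Your proposal is correct and follows the paper's proof: substitute $t_\ast^2=c(\alpha_\Delta+|y|)/\alpha_\Delta$ into the expanded form of $g$ to get $g(t_\ast)$, then read off the asymptotics from $\alpha_\Delta c=\frac{\Delta-1}{\Delta}|x|$ and $t_\ast=O(\sqrt{|x|})$. Your explicit admissibility check fills in a step the paper leaves implicit: viewing $t_\ast$ as the geometric mean of $1+|y|/\alpha_\Delta$ and $c$ shows that $t_\ast$ is admissible and $a_\ast>0$, and that $c>1+|y|/\alpha_\Delta$ already suffices.
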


\begin{proof}
Using $t_\ast^2=\frac{c(\alpha_\Delta+|y|)}{\alpha_\Delta}$, we have $\frac{c(\alpha_\Delta+|y|)}{t_\ast}=\alpha_\Delta t_\ast$.
Substituting into the expanded form of $g(t)$ yields \eqref{eq:zmax-exact}.
Finally, $\alpha_\Delta c=\frac{(\Delta-1)^2}{\Delta}\cdot\frac{|x|}{\Delta-1}=\frac{\Delta-1}{\Delta}|x|$, while
\[
t_\ast=\sqrt{c\Bigl(1+\frac{|y|}{\alpha_\Delta}\Bigr)}
=\sqrt{\frac{|x|}{\Delta-1}}\;\sqrt{1+\frac{|y|}{\alpha_\Delta}},
\]
 giving \eqref{eq:zmax-asymp}.
\end{proof}

\begin{remark}[Interpretation: cycles are small relative to their size]\label{rem:interpret}
Even if $|z|\le M|x|$ for some constant $M>0$, the activity of a single $k$-cycle polymer satisfies
\[
\bigl|\omega(C_k)\bigr|=\frac{|z|}{|x|^{k}}
\le \frac{M|x|}{|x|^k}
=\frac{M}{|x|^{k-1}}.
\]
In particular, for every fixed $k\ge 3$ this tends to $0$ as $|x|\to\infty$.
Thus the regime in Proposition~\ref{prop:linear-z} is still a low-density regime at the polymer level: larger polymers are strongly suppressed by their size, and the cluster expansion converges despite a cycle parameter $z$ of linear order in $|x|$.
\end{remark}


\section{Barvinok interpolation and deterministic approximation}\label{sec:barvinok}

In this section we use the zero-free region from Theorem~\ref{thm:main} to develop an explicit deterministic approximation procedure for $\Phi_G(x,y,z)$, when $G$ is a graph with maximum degree at most $\Delta$. Our strategy is to combine Barvinok’s method \cite{Barv16} with the bounded-degree interpolation framework of Patel--Regts \cite{PatReg19}.

\subsection{Analytic reduction: zero-freeness and Taylor truncation}\label{subsec:analytic}

We start by reducing the evaluation of $\Phi_G$ to a univariate interpolation $F(t)$ as follows.
Fix $(x,y,z)\in\CC^3$ with $x\neq 0$, and define
\begin{equation}\label{eq:def-F2}
F(t)\; :=\;t^{n}\,\Phi_G\!\left(\frac{x}{t},\,y,\,z\right),
\end{equation}
with $t\in\CC$. Note that $F(t)$ is a polynomial of degree at most $n$, with $F(0)=x^{n}\neq 0$ and $F(1)=\Phi_G(x,y,z)$.

If $F$ is zero-free on $|t|\le\rho$ with $\rho>1$, then there exists an analytic branch of the logarithm on this disk.
In particular, there is an analytic function $\log F(t)$ on $|t|\le\rho$ such that
\[
\exp(\log F(t))=F(t)\quad\text{for all }|t|\le\rho\]
and we fix the branch by requiring that $(\log F)(0)$ equals $\log(F(0))$.
Moreover, $\log F(t)$  admits a convergent Taylor expansion
\[
\log F(t)=\sum_{k\ge0} b_k t^k \quad \text{for}\, \quad |t|<\rho.
\]

By Barvinok’s method \cite{Barv16}, if the univariate interpolation $F(t)$ is zero-free on a disk $|t|\le \rho$ with $\rho>1$, then $\log F$ is analytic on this disk and $\log F(1)$ can be approximated (with an explicit, controllable error) by truncating the Taylor series of $\log F(t)$ at $t=0$.

Write the constants from Theorem~\ref{thm:main} as
\[
c:=(\Delta-1)e^{a},
\qquad
B:=\alpha_\Delta (e^{a}-1),
\qquad
\alpha_\Delta=\frac{(\Delta-1)^2}{\Delta}.
\]
Assume that $(x,y,z)$ is strictly inside the region of Theorem~\ref{thm:main}, i.e.
\begin{equation}\label{eq:strict-slack2}
|x|>c
\quad\text{and}\quad
|y|+\frac{c}{|x|-c}\,|z| \;<\; B.
\end{equation}
Define the slack
\begin{equation}\label{eq:def-slack2}
\delta \;:=\; B-|y| \;>\;0.
\end{equation}

\begin{lemma}[Explicit disk]\label{lem:disk2}
Under \eqref{eq:strict-slack2}, set
\begin{equation}\label{eq:def-rho2}
\rho \;:=\; \frac{|x|}{c\left(1+\frac{|z|}{\delta}\right)}.
\end{equation}
Then $\rho>1$ and $F(t)\neq 0$ for all $t\in\CC$ with $|t|\le \rho$.
\end{lemma}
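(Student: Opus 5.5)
The plan is to reduce nonvanishing of $F$ on the disk to a pointwise application of Theorem~\ref{thm:main}, using the same auxiliary parameter $a$ that defines $c$ and $B$. The key observation is that replacing $x$ by $x/t$ leaves $y$ and $z$ unchanged, so only the $x$-dependent factor in \eqref{eq:main-ineq} moves.

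First I would verify that $\rho>1$ and that $\delta>0$. Since $\frac{c}{|x|-c}|z|\ge 0$, the strict inequality in \eqref{eq:strict-slack2} gives $|y|<B$, i.e.\ $\delta>0$. It then also gives $\frac{c}{|x|-c}|z|<\delta$. Multiplying through by $(|x|-c)/\delta>0$ yields $c|z|/\delta<|x|-c$, that is, $|x|>c\bigl(1+\frac{|z|}{\delta}\bigr)$. This is exactly $\rho>1$; the case $z=0$ is included, since then it reduces to $|x|>c$.

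Next I would handle $t=0$ separately. Each monomial $x^{v(H)}$ of $\Phi_G$ contributes $t^{n-v(H)}x^{v(H)}$ to $F$, so $F$ is a polynomial, and its value at $0$ comes only from $H$ with $v(H)=n$. Hence $F(0)=x^n\neq 0$.

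For $0<|t|\le\rho$ we have $F(t)=t^n\Phi_G(x/t,y,z)$, so it suffices to show $\Phi_G(x',y,z)\neq 0$ with $x'=x/t$. Here
\[
|x'|\ge |x|/\rho=c\bigl(1+|z|/\delta\bigr)\ge c .
\]
Strictness matters: if $z\neq0$ we get $|x'|>c$ directly, and if $z=0$ we use $|x'|\ge|x|/\rho=c$ only when $z\ne0$. For $z=0$, $\rho=|x|/c$ gives $|x'|\ge c$ with possible equality at $|t|=\rho$, so $|x'|>c$ can fail on the boundary circle. I would handle this by noting that the constants may be rechosen: all inequalities in \eqref{eq:strict-slack2} are strict, so one can either shrink $a$ slightly or simply record that the statement needs $|x'|>c$. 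Concretely, I would treat $z=0$ via a slightly smaller $a'$ with $\alpha_\Delta(e^{a'}-1)>|y|$, giving $|x'|\ge c>(\Delta-1)e^{a'}$. Then Theorem~\ref{thm:main} applies with $a'$ and $|z|=0$.

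For $z\neq 0$, the inequality $|x'|-c\ge c|z|/\delta$ gives $\frac{c}{|x'|-c}|z|\le\delta$. Therefore
\[
|y|+\frac{c}{|x'|-c}|z|\le |y|+\delta=B,
\]
which is precisely \eqref{eq:main-ineq} at the point $(x',y,z)$. Theorem~\ref{thm:main} then yields $\Phi_G(x',y,z)\ne0$, and hence $F(t)\ne0$.

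The only delicate point is this boundary case $z=0$, $|t|=\rho$, where $|x'|=c$ exactly. The perturbation of $a$ above is the fix I would use there; everything else is the monotonicity of $\frac{c}{X-c}$ in $X$.
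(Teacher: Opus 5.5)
Your proof is correct and follows the same route as the paper: treat $t=0$ separately, then for $0<|t|\le\rho$ apply Theorem~\ref{thm:main} at $x/t$, using $|x/t|-c\ge c|z|/\delta$, with $\rho>1$ coming from the strict slack. Your extra step for $z=0$, shrinking $a$ to some $a'<a$, genuinely improves on the paper: when $z=0$ and $|t|=\rho$, the paper's claim $|x/t|\ge c(1+|z|/\delta)>c$ is false, since only $|x/t|\ge c$ holds there, and your perturbation closes exactly that gap.
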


\begin{proof}
If $t=0$, then $F(0)=x^n\neq 0$. Fix $t$ with $0<|t|\le \rho$.
By definition of $\rho$,
\[
\left|\frac{x}{t}\right|\ge \frac{|x|}{\rho} = c\left(1+\frac{|z|}{\delta}\right) > c,
\]
so the $x$-condition in Theorem~\ref{thm:main} holds for $x/t$. Moreover,
\[
\left|\frac{x}{t}\right|-c \;\ge\; \frac{|x|}{\rho}-c
= \frac{c|z|}{\delta}.
\]
If $z=0$, then $\frac{c}{|x/t|-c}|z|=0$. If $z\neq 0$, then
\[
\frac{c}{|x/t|-c}\,|z|
\;\le\;
\frac{c}{c|z|/\delta}\,|z|
\;=\;\delta.
\]
Using \eqref{eq:def-slack2}, we obtain
\[
|y|+\frac{c}{|x/t|-c}\,|z|
\;\le\;
|y|+\delta
= B,
\]
and Theorem~\ref{thm:main} yields $\Phi_G(x/t,y,z)\neq 0$. Since $F(t)=t^n\Phi_G(x/t,y,z)$ and $t\neq 0$,
we conclude that $F(t)\neq 0$ for all $0<|t|\le\rho$. Finally, $\rho>1$ follows from strictness in \eqref{eq:strict-slack2}, which is equivalent to $\frac{c}{|x|-c}|z|<\delta$,
i.e.\ $|x|>c\left(1+\frac{|z|}{\delta}\right)$, hence $\rho>1$.
\end{proof}

\begin{lemma}[Tail bound]\label{lem:tail2}
Suppose that $F(t)$ defined in~\eqref{eq:def-F2} is zero-free on $|t|\le\rho$ with $\rho>1$. Let $\log F$ denote the analytic logarithm on $|t| \leq \rho$ fixed as above, write  
\begin{equation}
  \log F(t)=\sum_{k\ge 0} b_k t^k \quad \text{and define \quad} T_m(t):=\sum_{k=0}^{m}b_kt^k.
\end{equation}
Given $\varepsilon$, we have that  $|\log F(1)-T_m(1)|\le \varepsilon$ as soon as
\begin{equation}\label{eq:m}
m \;:=\; \left\lceil \frac{\log\!\left(\frac{n}{(\rho-1)\varepsilon}\right)}{\log\rho}\right\rceil .
\end{equation}
\end{lemma}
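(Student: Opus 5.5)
The plan is to bound the Taylor coefficients $b_k$ for $k\ge 1$ by factoring $F$ over its zeros, and then to sum the geometric tail.

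\textbf{Factorization.} Since $F$ is a polynomial of degree $d\le n$ with $F(0)=x^n\neq 0$, it factors as
\[
F(t)=F(0)\prod_{j=1}^{d}\Bigl(1-\frac{t}{\zeta_j}\Bigr),
\]
where $\zeta_1,\dots,\zeta_d$ are its roots. By the zero-freeness hypothesis, $|\zeta_j|>\rho$ for every $j$. On the disk $|t|\le\rho$ we may then write
\[
\log F(t)=\log F(0)+\sum_{j=1}^{d}\log\Bigl(1-\frac{t}{\zeta_j}\Bigr),
\]
using the principal branch of each logarithm. This agrees with the branch fixed in the paper: both sides are analytic, have the same exponential, and coincide at $t=0$.

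\textbf{Coefficient bound.} Expanding $\log(1-u)=-\sum_{k\ge1}u^k/k$ gives, for $k\ge 1$,
\[
b_k=-\frac1k\sum_{j=1}^{d}\zeta_j^{-k}.
\]
Hence $|b_k|\le \frac{d}{k}\rho^{-k}\le n\rho^{-k}$, since $k\ge 1$ and $d\le n$.

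\textbf{Tail sum.} We then estimate
\[
|\log F(1)-T_m(1)|\le\sum_{k>m}|b_k|\le n\sum_{k\ge m+1}\rho^{-k}=\frac{n\,\rho^{-m}}{\rho-1}.
\]
This is at most $\varepsilon$ exactly when $\rho^{m}\ge n/((\rho-1)\varepsilon)$, that is, when $m\ge \log\bigl(n/((\rho-1)\varepsilon)\bigr)/\log\rho$. The ceiling in \eqref{eq:m} guarantees this, using $\log\rho>0$.

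The argument is short, and the only real obstacle is the branch bookkeeping. We must make sure the sum of principal logarithms is the same analytic branch as the $\log F$ fixed in the paper; as noted above, this holds because both are analytic on the disk, have the same exponential, and agree at $0$. We also need the series for each $\log(1-t/\zeta_j)$ to converge at $t=1$, which holds because $|1/\zeta_j|<1/\rho<1$.

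If $\log\bigl(n/((\rho-1)\varepsilon)\bigr)\le 0$, the formula gives $m\le 0$. Then $m=0$ works directly, since $n/(\rho-1)\le\varepsilon$, and any negative value should be read as $m=0$.
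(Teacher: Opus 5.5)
Your proposal is correct and follows essentially the same route as the paper: factor $F$ over its zeros, identify $b_k=-\frac1k\sum_j\zeta_j^{-k}$ for $k\ge 1$, and sum the geometric tail using $d\le n$. The differences are minor. You discard the factor $1/k$, whereas the paper keeps a $1/(m+1)$ that the conclusion does not need. You also make explicit the branch identification and the degenerate case $m\le 0$, both of which the paper leaves implicit.
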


\begin{proof}
Let us rewrite $F(t)$ as 
\begin{equation}\label{eq:factor}
F(t)=F(0)\prod_{i=1}^{d}\left(1-\frac{t}{\zeta_i}\right),
\qquad d=\deg F,
\end{equation}
where $\zeta_1,\dots,\zeta_d$ are the zeros of $F$ (listed with multiplicity) and then
\[
\log F(t)=\log F(0)+\sum_{i=1}^{d}\log\!\left(1-\frac{t}{\zeta_i}\right).
\]
Zero-freeness on $|t|\le\rho$ implies $|\zeta_i|\ge \rho$ for all $i$. Therefore, for $|t|\le 1$ we have $\left|\frac{t}{\zeta_i}\right|\le \frac{1}{\rho}<1$, so we may use the absolutely convergent expansion
$\log(1-w)=-\sum_{k\ge 1} \frac{w^k}{k}$.
Thus
\[
\log F(t)=\log F(0)-\sum_{i=1}^{d}\sum_{k\geq1}\left(\frac{t^k}{k\zeta_i^k}\right)=\log F(0)-\sum_{k\geq1}\frac{t^k}{k}\left(\sum_{i=1}^{d}\frac{1}{\zeta_i^k}\right).
\]
By comparing coefficients we have $b_0=\log F(0)$ and, for $k\ge1$,
\[
b_k=-\frac{1}{k}\sum_{i=1}^{d}\zeta_i^{-k}.
\]
Hence 
\[
\left|\log F(1)-T_m(1)\right|
\le
\sum_{k>m}\frac{1} {k}\left(\sum_{i=1}^{d}\frac{1}{|\zeta_i|^k}\right)
\le
\sum_{k>m}\frac{d}{k\rho^k}.
\]
Then 
\[
\left|\log F(1)-T_m(1)\right|
\le
\sum_{k>m}\frac{d}{k\rho^k}
\le
\frac{d}{m+1}\sum_{k>m}\rho^{-k}
=
\frac{d}{(m+1)}\cdot\frac{\rho^{-m}}{\rho-1}
=
\frac{d}{(m+1)(\rho-1)\rho^{m}}.
\]
Since $\deg F=d\le n$, we further obtain
\[
\left|\log F(1)-T_m(1)\right|
\le
\frac{n}{(m+1)(\rho-1)\rho^{m}}.
\]
Taking \[
m \;=\; \left\lceil \frac{\log\!\left(\frac{n}{(\rho-1)\varepsilon}\right)}{\log\rho}\right\rceil,
\]
then  $\rho^m \ge \frac{n}{(\rho-1)\varepsilon}$, hence
\[
\left|\log F(1)-T_m(1)\right|\leq \frac{n}{(m+1)(\rho-1)\rho^{m}}
\le
\frac{n}{(m+1)(\rho-1)}\cdot\frac{(\rho-1)\varepsilon}{n}
=
\frac{\varepsilon}{(m+1)}
\le \varepsilon,
\]
which completes the proof.
\end{proof}

Combining Lemmas~\ref{lem:disk2} and \ref{lem:tail2}, we reduce the approximation of
$F(1)=\Phi_G(x,y,z)$ to computing the first $m$ coefficients $b_0,\dots,b_m$ of the Taylor expansion
$\log F(t)=\sum_{k\ge0} b_k t^k$ at $t=0$, where $m$ is given by~\eqref{eq:m}.
In the next step we show that, for bounded-degree graphs, these coefficients can be computed deterministically
within the Patel--Regts bounded-degree interpolation framework.

\subsection{Algorithmic step: computing the Taylor coefficients via Patel--Regts}\label{subsec:algorithmic}

It remains to compute the coefficients $b_0,\dots,b_m$ of the Taylor expansion
$\log F(t)=\sum_{k\ge0} b_k t^k$ at $t=0$, where $m$ is given by~\eqref{eq:m}.
We do this using the bounded-degree interpolation framework of Patel--Regts~\cite{PatReg19}.

For fixed $(x,y,z)$, we define 
\[
 \widetilde F_G(t):=\frac{F_G(t)}{x^n},
\]
then $\widetilde F_G(0)=1$ and on the zero-free disk of Lemma~\ref{lem:disk2} we fix the analytic branch of $\log F_G$
by requiring $\log F_G(0)=\log(x^n)$. In particular,
\[
\log F_G(t)=\log(x^n)+\log \widetilde F_G(t),
\]
where $\log \widetilde F_G$ is the analytic logarithm normalized by $\log \widetilde F_G(0)=0$. Then
 $b_0=\log(x^n)$ and for every $k\ge1$,
\[
b_k=[t^k]\log \widetilde F_G(t).
\]

From the definition of $F_G(t)$,
\begin{equation}\label{eq:def-F3-rev}
F_G(t)=\sum_{H\in\calH(G)} x^{v(H)}y^{e(H)}z^{c(H)}\, t^{\,n-v(H)}.
\end{equation}
Hence, writing $\widetilde F_G(t)=\sum_{k=0}^{n} a_k(G)\,t^k$, we have
\begin{equation}\label{eq:ak2-rev}
a_k(G)=[t^k]\widetilde F_G(t)
= x^{-k} \sum_{\substack{H\in\calH(G)\\ n-v(H)=k}} y^{e(H)}\,z^{c(H)}.
\end{equation}

Given a vertex set $U \subseteq V(G)$, let $G[U]$ denote the induced subgraph on $U$.
For a graph $J$ on $k$ vertices, let $\mathcal H(J)$ denote the set of sesquivalent spanning subgraphs of $J$.
Define
\[
\mathcal S(J):=\{S\in\mathcal H(J): v(S)=0\},
\qquad
\lambda(J):=x^{-k}\sum_{S\in\mathcal S(J)} y^{e(S)}z^{c(S)}.
\]

\begin{lemma}\label{lem:bigcp-form}
For every $k\in\{0,1,\dots,n\}$,
\begin{equation}\label{eq:ak3-rev}
a_k(G)=\sum_{\substack{U\subseteq V(G)\\ |U|=k}} \lambda(G[U]).
\end{equation}
\end{lemma}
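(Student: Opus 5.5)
The plan is to set up a bijection between sesquivalent subgraphs $H\in\calH(G)$ with $n-v(H)=k$ and pairs $(U,S)$, where $U\subseteq V(G)$ has $|U|=k$ and $S\in\mathcal S(G[U])$. Once this bijection preserves the statistics $e$ and $c$, identity \eqref{eq:ak3-rev} follows directly from \eqref{eq:ak2-rev} by regrouping the sum.

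\textbf{Forward map.} Given $H\in\calH(G)$, let $U(H)$ be the set of vertices of $H$ that are not isolated. Then $|U(H)|=n-v(H)$ by \eqref{eq:sesq-balance}. Every edge of $H$ has both endpoints in $U(H)$, so $E_H\subseteq E(G[U(H)])$. Define $S(H):=(U(H),E_H)$. It is a spanning subgraph of $G[U(H)]$ whose components are exactly the non-isolated components of $H$, namely edges and simple cycles. Hence $S(H)$ is sesquivalent with $v(S(H))=0$, so $S(H)\in\mathcal S(G[U(H)])$. Moreover $e(S(H))=e(H)$ and $c(S(H))=c(H)$.

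\textbf{Inverse map.} Given $U$ with $|U|=k$ and $S\in\mathcal S(G[U])$, let $H$ be the spanning subgraph $(V,E_S)$ of $G$. This uses $E(G[U])\subseteq E$. The components of $H$ are the components of $S$ together with the isolated vertices $V\setminus U$, so $H\in\calH(G)$. Since $S$ has no isolated vertices, the set of non-isolated vertices of $H$ is exactly $U$. Therefore $v(H)=n-k$, and the two maps are mutually inverse.

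\textbf{Conclusion.} Summing $x^{-k}y^{e(H)}z^{c(H)}$ over $H$ with $n-v(H)=k$ is the same as summing over all $U$ with $|U|=k$ and over $S\in\mathcal S(G[U])$. The inner sum is $\lambda(G[U])$ by definition, which gives \eqref{eq:ak3-rev}.

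The only delicate point is checking that $v(S)=0$ pins down $U$ uniquely. This is what makes the decomposition a bijection rather than an overcount, and it is immediate from the definition of $\mathcal S$. The degenerate case $k=0$ also needs a check: there $\lambda$ of the empty graph equals $1$, the only $H$ is the edgeless graph, and $a_0=1$, consistent with $\widetilde F_G(0)=1$.
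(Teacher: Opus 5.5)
Your proof is correct and follows the same approach as the paper: a bijection between sesquivalent $H\in\calH(G)$ with $n-v(H)=k$ and pairs $(U,S)$ with $|U|=k$ and $S\in\mathcal S(G[U])$, followed by regrouping the sum in \eqref{eq:ak2-rev}. Compared with the paper, you also check explicitly that $E_H\subseteq E(G[U])$, that $e$ and $c$ are preserved, that $v(S)=0$ determines $U$, and the $k=0$ case; the paper leaves these implicit.
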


\begin{proof}
Fix $k$. Recall from~\eqref{eq:ak2-rev} that
\[
a_k(G)=x^{-k}\sum_{\substack{H\in\mathcal H(G)\\ n-v(H)=k}} y^{e(H)}z^{c(H)}.
\]
Let $H\in\mathcal H(G)$ with $n-v(H)=k$ and let $U\subseteq V(G)$ be the set of non-isolated vertices of $H$, i.e., $|U|=k$. Consider the induced subgraph $G[U]$ and let $S:=H[U]$ (restriction of $H$ to $U$).
Since all isolated vertices of $H$ lie in $V(G)\setminus U$, the subgraph $S$ has no isolated vertices, i.e.\ $v(S)=0$.
Moreover, $S$ is sesquivalent on the vertex set $U$, hence $S\in\mathcal S(G[U])$.

Conversely, given $U\subseteq V(G)$ with $|U|=k$ and $S\in\mathcal S(G[U])$, there is a unique
$H\in\mathcal H(G)$ obtained by taking $H[U]=S$ and setting every vertex in $V(G)\setminus U$ as isolated vertices.
This establishes a bijection between sesquivalent subgraph $H$ with $n-v(H)=k$ and pairs $(U,S)$ with $|U|=k$ and $S\in\mathcal S(G[U])$.
Substituting into~\eqref{eq:ak2-rev} yields~\eqref{eq:ak3-rev}.
\end{proof}

\begin{remark}[Grouping by isomorphism]\label{rem:isomorphism}
Let $\mathcal G_k$ be a fixed set of representatives of isomorphism classes of graphs on $k$ vertices.
For $J\in\mathcal G_k$, let $\ind(J,G)$ be the number of vertex subsets $U\subseteq V(G)$ with $|U|=k$ such that $G[U]\cong J$.
Since $\lambda(G[U])$ depends only on the isomorphism class of $G[U]$, we can group the sum in~\eqref{eq:ak3-rev} and write
\begin{equation}\label{eq:BIGCP-expansion-rev}
a_k(G)=\sum_{J\in\mathcal G_k}\lambda_J\,\ind(J,G),
\qquad\text{where }\ \lambda_J:=\lambda(J)=x^{-k}\sum_{S\in\mathcal S(J)} y^{e(S)}z^{c(S)}.
\end{equation}
\end{remark}

Patel--Regts~\cite{PatReg19} introduce \emph{bounded induced graph counting polynomials} (BIGCPs) as graph polynomials
whose coefficients admit an expansion of the form~\eqref{eq:BIGCP-expansion-rev}, with weights computable in time
$\beta^{|V(J)|}$ for some constant $\beta$, and which are multiplicative under disjoint unions. 

\begin{lemma}\label{lem:bigcp}
Fix $(x,y,z)$ and $\Delta$. For graphs $G$ of maximum degree at most $\Delta$, the assignment $G\mapsto \widetilde F_G(t)$
is a BIGCP in the sense of \cite[Def.~3.1]{PatReg19}.
Moreover, for each $k$-vertex graph $J$ with $\Delta(J)\le \Delta$, the weight $\lambda_J$ in~\eqref{eq:BIGCP-expansion-rev}
can be computed in time $(2^{\Delta/2})^{k}\cdot \mathrm{poly}(k)$ (hence the BIGCP computability requirement holds with constant $\beta=2^{\Delta/2}$).
\end{lemma}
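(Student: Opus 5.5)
The plan is to check the three requirements of \cite[Def.~3.1]{PatReg19} in turn: the induced-subgraph expansion of the coefficients, multiplicativity under disjoint unions, and efficient computability of the weights. The expansion is already in hand. Lemma~\ref{lem:bigcp-form} and Remark~\ref{rem:isomorphism} give $a_k(G)=\sum_{J\in\mathcal G_k}\lambda_J\,\ind(J,G)$, and we have $a_0(G)=1$ because $\widetilde F_G(0)=1$. Since $\lambda_J$ depends only on the isomorphism class of $J$, the weights are graph invariants. Patel--Regts also require $\lambda_J=0$ unless $J$ is connected, or else they accept a multiplicative version; I will check which form their definition uses. If connectedness is needed, it holds automatically in a multiplicative setting, as the next step shows.

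For multiplicativity, take $G=G_1\sqcup G_2$. Every sesquivalent spanning subgraph of $G$ restricts uniquely to a pair $(H_1,H_2)\in\calH(G_1)\times\calH(G_2)$, and this restriction is a bijection. The statistics $v$, $e$ and $c$ add over the two pieces, and so does $n-v(H)$. Substituting into \eqref{eq:def-F3-rev} and dividing by $x^{n_1+n_2}$ gives $\widetilde F_{G_1\sqcup G_2}(t)=\widetilde F_{G_1}(t)\,\widetilde F_{G_2}(t)$. The same argument applied to $\mathcal S(J)$ shows that $\lambda$ is multiplicative over the components of $J$.

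For computability, fix a $k$-vertex graph $J$ with $\Delta(J)\le\Delta$. I would enumerate the elements of $\mathcal S(J)$ directly. Each $S\in\mathcal S(J)$ is a subset of $E(J)$ in which every vertex has degree $1$ or $2$ and every component is a $K_2$ or a cycle. Since $|E(J)|\le\Delta k/2$, there are at most $2^{\Delta k/2}$ edge subsets. For each subset, checking the component structure and computing $e(S)$ and $c(S)$ takes $\mathrm{poly}(k)$ time. Summing $y^{e(S)}z^{c(S)}$ over the valid subsets and multiplying by $x^{-k}$ therefore takes time $(2^{\Delta/2})^k\mathrm{poly}(k)$.

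The main obstacle is matching the exact formal conditions of \cite[Def.~3.1]{PatReg19}. Their definition may require only connected $J$ to carry nonzero weight and may also require a degree bound $d$ with $a_k=0$ for $k>d\cdot n$. If so, I would use multiplicativity to move to a connected-component form, or cite their remark that multiplicative families with induced-subgraph expansions satisfy the definition. The degree condition holds trivially, since $\deg\widetilde F_G\le n$.
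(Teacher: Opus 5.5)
Your proposal is correct and follows the paper's proof essentially step for step: the coefficient expansion comes from Lemma~\ref{lem:bigcp-form} and Remark~\ref{rem:isomorphism}, multiplicativity comes from the unique splitting of a sesquivalent subgraph of $G_1\sqcup G_2$, and the weight bound comes from brute-force enumeration of the at most $2^{\Delta k/2}$ edge subsets of $J$. One small correction to your hedge: multiplicativity does not make $\lambda_J$ vanish on disconnected $J$ (for example, $\lambda_{2K_2}=x^{-4}y^2\neq 0$), but this is harmless, because \cite[Def.~3.1]{PatReg19} requires only multiplicativity, and Patel--Regts derive connected support from it for the inverse power sums rather than imposing it on the weights.
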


\begin{proof}
As mentioned above, Lemma~\ref{lem:bigcp-form} provides the coefficient expansion demanded by the definition of BIGCP. Now let us check the multiplicativity of the mapping $G\mapsto \widetilde F_G(t)$. Suppose that graph $G$ can be decomposed in the disjoint union $G=G_1\sqcup G_2$, then a sesquivalent subgraph of $G$ can also be decomposed uniquely
into a sesquivalent subgraph of $G_1$ and one of $G_2$. Therefore,  we obtain $\widetilde F_{G}(t)=\widetilde F_{G_1}(t)\,\widetilde F_{G_2}(t)$.

Finally we check the weights. Fix a graph $J$ on $k$ vertices with maximum degree at most $\Delta$.
By enumerating all spanning subgraphs $S=(V(J),F)$, $F\subseteq E(J)$ (there are $2^{|E(J)|}$ choices), and for each $F$
testing $S\in\mathcal S(J)$ and computing $e(S),c(S)$ in $\mathrm{poly}(k)$ time. Since $|E(J)|\le \Delta k/2$, this gives
time $2^{\Delta k/2}\mathrm{poly}(k)$, hence the weights satisfy the BIGCP computability requirement
\cite[Def.~3.1]{PatReg19} with $\beta=2^{\Delta/2}$.
\end{proof}

We now state explicitly the deterministic approximation algorithm obtained from the zero-free disk of
Lemma~\ref{lem:disk2} and the Taylor truncation bound of Lemma~\ref{lem:tail2}.

\begin{theorem}[Deterministic approximation for $\Phi_G(x,y,z)$]\label{thm:det-approx2}
Let $G$ be a graph on $n$ vertices with maximum degree at most $\Delta$.
Assume $(x,y,z)\in\CC^3$ satisfies the strict slack condition \eqref{eq:strict-slack2}, and let $\rho>1$ be the radius
defined in \eqref{eq:def-rho2}. Fix $\varepsilon\in(0,1)$ and set
\[
m \;:=\; \left\lceil \frac{\log\!\left(\frac{n}{(\rho-1)\varepsilon}\right)}{\log\rho}\right\rceil .
\]
There is a deterministic algorithm that, given $(G,x,y,z,\varepsilon)$, outputs a complex number $\widehat{\Phi}$
such that
\[
\bigl|\log \Phi_G(x,y,z)-\log \widehat{\Phi}\bigr|\le \varepsilon,
\qquad\text{equivalently}\qquad
\widehat{\Phi}=\Phi_G(x,y,z)\,e^{\eta}\ \text{with}\ |\eta|\le \varepsilon,
\]
where $\log$ denotes the analytic branch induced by the zero-freeness of $F_G$ on $|t|\le\rho$ with $\log F_G(0)=\log(x^n)$.
The running time is
\[
n\cdot (e\Delta)^{O(m)}\cdot \mathrm{poly}(m).
\]
\end{theorem}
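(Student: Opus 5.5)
The plan is to combine the three ingredients already in place: the zero-free disk of Lemma~\ref{lem:disk2}, the truncation estimate of Lemma~\ref{lem:tail2}, and the Patel--Regts machinery for BIGCPs, which applies by Lemma~\ref{lem:bigcp}.

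\textbf{Step 1: reduction to a truncated Taylor series.} By Lemma~\ref{lem:disk2}, under \eqref{eq:strict-slack2} the polynomial $F_G(t)=t^n\Phi_G(x/t,y,z)$ has no zeros on $|t|\le\rho$, and $\rho>1$. Hence the analytic branch of $\log F_G$ with $\log F_G(0)=\log(x^n)$ exists on this disk. Since $F_G(1)=\Phi_G(x,y,z)$, this branch defines the $\log\Phi_G(x,y,z)$ appearing in the statement. Lemma~\ref{lem:tail2}, with $m$ as in \eqref{eq:m}, then gives $|\log F_G(1)-T_m(1)|\le\varepsilon$. We will output $\widehat\Phi:=\exp(T_m(1))$. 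This immediately gives $\widehat\Phi=\Phi_G(x,y,z)e^{\eta}$ with $\eta=T_m(1)-\log F_G(1)$, so $|\eta|\le\varepsilon$. Strictly speaking, the guarantee requires computing $b_1,\dots,b_m$ exactly. If they are only computed approximately (for example in floating point), we replace $\varepsilon$ by $\varepsilon/2$ in Lemma~\ref{lem:tail2} and absorb the extra error, which changes $m$ only by an additive constant.

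\textbf{Step 2: computing the coefficients.} We set $b_0=\log(x^n)$ and $b_k=[t^k]\log\widetilde F_G(t)$ for $k\ge1$. By Lemma~\ref{lem:bigcp}, $G\mapsto\widetilde F_G$ is a BIGCP with $\beta=2^{\Delta/2}$. The main theorem of Patel--Regts \cite{PatReg19} on BIGCPs states that, for graphs of maximum degree at most $\Delta$, the first $m$ coefficients of the logarithm (equivalently the power sums of the inverse roots) can be computed in time $n\,(e\Delta)^{O(m)}\beta^{O(m)}\mathrm{poly}(m)$. Concretely, the procedure is as follows.
\begin{enumerate}[label=(\roman*)]
\item Compute $a_0,\dots,a_m$ from the induced-subgraph counts.
\item Show that the inverse power sums $p_k=\sum_i\zeta_i^{-k}$ are additive under disjoint union, so that each $p_k$ becomes a linear combination of connected induced-subgraph counts $\ind(J,G)$ with $|V(J)|\le k$.
\item Enumerate the connected induced subgraphs on at most $m$ vertices containing a given vertex. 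There are at most $(e\Delta)^{m}$ of them, and they can be listed in time $n(e\Delta)^{O(m)}$.
\item Use Newton's identities $k a_k=\sum_{j=1}^k a_{k-j}\,(-p_j)$, which take only $\mathrm{poly}(m)$ arithmetic operations, to pass between the $a_k$ and the $p_k$.
\end{enumerate}
Then $b_k=-p_k/k$, exactly as in the proof of Lemma~\ref{lem:tail2}.

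\textbf{Step 3: collecting the running time.} The per-subgraph weight cost is $\beta^{k}\mathrm{poly}(k)=2^{\Delta k/2}\mathrm{poly}(k)$ with $k\le m$, which is $(e\Delta)^{O(m)}$ because $2^{\Delta/2}\le (e\Delta)^{O(1)}$ for fixed $\Delta$. Evaluating $T_m(1)$ and exponentiating add only $\mathrm{poly}(m)$. The total is therefore $n\cdot(e\Delta)^{O(m)}\cdot\mathrm{poly}(m)$.

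\textbf{Main obstacle.} The delicate step is Step~2: making sure that the Patel--Regts theorem applies verbatim to our $\widetilde F_G$. This needs three things. First, $\widetilde F_G(0)=1$ and multiplicativity hold, and both are covered by Lemma~\ref{lem:bigcp}. Second, the weights $\lambda_J$ must depend only on the isomorphism type of $J$, which Remark~\ref{rem:isomorphism} provides. Third, the constant hidden in $O(m)$ absorbs $\beta$, so the dependence on $\Delta$ must be tracked carefully in the exponent. If the cited theorem is stated only for the coefficients $a_k$, I would first derive the additivity of $p_k$ directly from multiplicativity, and then reproduce the connected-subgraph enumeration argument to justify the stated bound.
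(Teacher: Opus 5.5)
Your proposal is correct and follows essentially the same route as the paper. Lemma~\ref{lem:disk2} supplies the zero-free disk, Lemma~\ref{lem:tail2} the truncation error, and \cite[Thm.~3.1]{PatReg19}, applicable by Lemma~\ref{lem:bigcp}, supplies $p_1,\dots,p_m$, whence $b_j=-p_j/j$ and $\widehat{\Phi}=\exp(T_m(1))$. Your explicit absorption of the factor $\beta^{O(m)}=2^{O(\Delta m)}$ into $(e\Delta)^{O(m)}$, with a $\Delta$-dependent implied constant, is if anything more careful than the paper, which does not mention $\beta$ at all.

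One small caution about your informal sketch. Item (i) should not be read as computing $a_k(G)$ by summing $\lambda(G[U])$ over all $k$-subsets $U$; that would cost $\binom{n}{k}$ evaluations. In Patel--Regts, Newton's identities are applied to the small connected graphs $J$ to obtain the coefficients of $\ind(J,G)$ in $p_k$. Since you invoke the theorem as a black box, this does not affect the validity of your argument.
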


\paragraph{Algorithm (Barvinok--Taylor on the canonical interpolation).}
\begin{enumerate}[label=\textup{(\arabic*)}, leftmargin=2.2em]
\item Form $F_G(t)=t^n\Phi_G(x/t,y,z)$ and compute $\rho>1$ as in Lemma~\ref{lem:disk2}.
\item Set $m=\left\lceil \frac{\log(\frac{n}{(\rho-1)\varepsilon})}{\log\rho}\right\rceil$.
\item Compute the inverse power sums $p_j=\sum_i \zeta_i^{-j}$ of the zeros $(\zeta_i)$ of $\widetilde F_G$ for $1\le j\le m$
      using \cite[Thm.~3.1]{PatReg19} (applicable by Lemma~\ref{lem:bigcp}).
\item Set $b_0=\log(x^n)$ and $b_j=-p_j/j$ for $1\le j\le m$, and output $\widehat{\Phi}:=\exp\!\bigl(\sum_{j=0}^{m} b_j\bigr)$.
\end{enumerate}

\begin{proof}
By Lemma~\ref{lem:disk2}, $F_G(t)\neq 0$ for all $|t|\le \rho$, hence $\log F_G$ is holomorphic on $|t|\le\rho$
(with the branch specified by $\log F_G(0)=\log(x^n)$) and admits a convergent Taylor expansion at $0$.
By Lemma~\ref{lem:tail2} and the choice of $m$,
\[
\bigl|\log F_G(1)-T_m(1)\bigr|\le \varepsilon,
\qquad
T_m(t):=\sum_{k=0}^{m} b_k t^k.
\]
Since $F_G(1)=\Phi_G(x,y,z)$, the output $\widehat{\Phi}=\exp(T_m(1))$ satisfies
$|\log \Phi_G(x,y,z)-\log \widehat{\Phi}|\le \varepsilon$, which is equivalent to
$\widehat{\Phi}=\Phi_G(x,y,z)\,e^{\eta}$ with $|\eta|\le \varepsilon$.

It remains to justify Step~(3) and the running time.
By Lemma~\ref{lem:bigcp}, $\widetilde F_G$ is a BIGCP for bounded-degree graphs. Therefore,
\cite[Thm.~3.1]{PatReg19} computes the inverse power sums $p_1,\dots,p_m$ of the zeros of $\widetilde F_G$ in time
$n\cdot (e\Delta)^{O(m)}\cdot \mathrm{poly}(m)$.

Finally, since $\widetilde F_G(0)=1$ we have on the zero-free disk
\[
\log \widetilde F_G(t)=\sum_i \log\!\left(1-\frac{t}{\zeta_i}\right)
=-\sum_{j\ge 1}\frac{p_j}{j}\,t^j,
\]
hence $[t^j]\log \widetilde F_G(t)=-p_j/j$ for $1\le j\le m$.
Using $\log F_G(t)=n\log x+\log \widetilde F_G(t)$ yields $b_0=\log(x^n)$ and $b_j=-p_j/j$ for $1\le j\le m$,
which is exactly Step~(4).
\end{proof}

\bibliographystyle{plainurl}
\bibliography{references}

\end{document}